\newcommand{\R}{\mathbb{R}} %
\newcommand{\Rnn}{\mathbb{R}_{\geq 0}} %
\newcommand{\Rp}{\mathbb{R}_{>0}} %
\newcommand{\Np}{\mathbb{N}_+} %
\newcommand{\Indicator}[1]{\mathbb{I}\{{#1}\}}
\newcommand{\Borel}{\mathcal{B}} %
\newcommand{\Pbs}{\mathcal{P}} %
\newcommand{\E}{\mathbb{E}} %
\newcommand{\diff}{\, \mathrm{d}} %
\newcommand{\weak}{\rightharpoonup} %
\newcommand{\cKi}{\mathcal{K}_\infty} %
\newcommand{\s}{\phi} %
\newcommand{\es}{\bar{x}} %
\newcommand{\is}[1]{{\underline{#1}}} %
\theoremstyle{theorem}
\newtheorem{theorem}{Theorem}
\newtheorem{lemma}[theorem]{Lemma}
\newtheorem{corollary}[theorem]{Corollary}
\theoremstyle{definition}
\newtheorem{definition}{Definition}
\newtheorem{assumption}{Assumption}
\newtheorem{example}{Example}
\theoremstyle{remark}
\newtheorem{remark}{Remark}
\newif\ifInitialSubmission
\title{\LARGE \bf
Towards optimal control of ensembles of discrete-time systems
}
\author{Christian Fiedler$^{1}$ and Alessandro Scagliotti$^{1}$%
\thanks{$^{1}$Department of Mathematics, CIT School, Technical University of Munich, Germany, and Munich Center for Machine Learning
        {\tt\small christian.fiedler@tum.de, scag@ma.tum.de}%
}}
\begin{document}

\maketitle
\thispagestyle{empty}
\pagestyle{empty}

\begin{abstract}
The control of ensembles of dynamical systems is an intriguing and challenging problem, arising for example in quantum control.
We initiate the investigation of optimal control of ensembles of discrete-time systems, focusing on minimising the average finite horizon cost over the ensemble.
For very general nonlinear control systems and stage and terminal costs, we establish existence of minimisers under mild assumptions.
Furthermore, we provide a $\Gamma$-convergence result which enables consistent approximation of the challenging ensemble optimal control problem,
for example, by using empirical probability measures over the ensemble.
Our results form a solid foundation for discrete-time optimal control of ensembles, with many interesting avenues for future research.
\end{abstract}

\section{Introduction} \label{sec:intro}
In some control applications, the intriguing problem of controlling \emph{simultaneously} a whole ensemble of uncoupled dynamical systems arises.
This setup appears typically in two situations. 
First, one might have a collection of dynamical systems that can only be controlled collectively, i.e., the same control input is applied to all systems in the ensemble simultaneously. This happens for example in some quantum control problems \cite{li2006control,ruths2012optimal} or biomedical applications \cite{scagliotti2025ensemble}.
Second, one might have a single control system subject to some parametric uncertainty, modelled for example by a probability measure over the uncertain parameters.
Controlling this single, but uncertain control system can then be interpreted as controlling a whole ensemble of systems, one for each instance of the uncertainty \cite{Ross2015Riemann,Caillau2018Solving}.
Since the introduction of ensemble control setup in \cite{Li2009Ensemble}, this problem has received considerable attention from the control community.
A particular focus has been on controllability questions, which are challenging even in the linear case \cite{Loheac2016From,Baparou2024Conditions}, and which are of particular interest in quantum control \cite{Augier2018Adiabatic}.
Complementary to these issues, \cite{scagliotti2023optimal} has started to consider optimal control of ensembles of dynamical systems.
Conceptually, a cost functional is introduced for each system in the ensemble separately, and the costs are then aggregated over the whole ensemble.
In \cite{scagliotti2023optimal}, a distribution over the systems is assumed and the integral cost is optimised, corresponding to controlling the average cost of the ensemble, 
while in \cite{scagliotti2025minimax}, the supremum over the individual costs is used for aggregation, corresponding to minimax control of the ensemble.
In both cases, continuous-time control-affine systems over a finite horizon are considered.

In this work, we initiate the investigation of optimal control of ensembles of \emph{discrete-time} dynamical systems.
From a practical perspective, this is motivated by digital control systems and techniques like model predictive control,
and from a mathematical perspective, it is interesting since considerably more general systems and weaker regularity assumptions can be used.
Indeed, we consider general nonlinear control systems on general metric spaces, which is particularly interesting for distributed-parameter, hybrid, and digital control systems, and even the cost functions need only mild regularity assumptions, allowing a considerable degree of freedom in the choice thereof.
While some works have considered control of ensembles of discrete-time systems, e.g., \cite{tie2016controllability}, optimal control in a setting similar to \cite{scagliotti2023optimal} has not been investigated in a discrete-time setting before.
As a starting point, we focus on optimal control of the average finite horizon cost over the ensemble, as introduced in \cite{scagliotti2023optimal},
and we establish the existence of minimising control sequences using the direct method of the calculus of variations under rather mild assumptions, cf. Theorem \ref{thm:dteoc:existenceOfMinimisers}.
Since optimisation of an integral functional can be challenging, cf. Section \ref{sec:ecc:gammaConv} for some intuition on this, we consider approximations of the average cost optimal control problem, and we use $\Gamma$-convergence arguments to ensure the consistency of these approximations, cf. Theorem \ref{thm:dteoc:GammaConvergence} and Corollary \ref{cor:ecc:convOfMin}.
While this approach has been introduced in \cite{scagliotti2023optimal}, our general discrete-time setup differs significantly on a technical level.

\emph{Outline} In Section \ref{sec:ecc:prelims}, we introduce our notation, the problem setup, and provide some technical preliminaries.
We show existence of minimisers of the ensemble optimal control problem in Section \ref{sec:ecc:existenceMinimisers},
and in Section \ref{sec:ecc:gammaConv}, we establish a $\Gamma$-convergence result for the optimal control problems.
Section \ref{sec:ecc:conclusion} concludes the manuscript.
\section{Preliminaries} \label{sec:ecc:prelims}
\subsection{Notation}
For a metric space $(S,d_S)$, we denote by $\Borel(S)$ the Borel $\sigma$-algebra generated by the open sets of $S$,
and we denote by $\Pbs(S)$ the set of Borel probability measures on $S$.
If $\varphi \colon S \rightarrow \Rnn$ is $\Borel(S)$-$\Borel(\Rnn)$ measurable and $\mu \in \Pbs(S)$, we denote by $\int_S \varphi(s) \diff \mu(s)$ the corresponding Lebesgue integral.
We say that $(\mu_k)_k\subseteq\Pbs(S)$ converge weakly to $\mu\in\Pbs(S)$, written $\mu_k \rightharpoonup \mu$, if
\begin{equation*}
    \int_\Theta \varphi(s) \diff \mu_k(s) \rightarrow \int_\Theta \varphi(s) \diff \mu(s)
\end{equation*}
for all $\varphi \colon S\rightarrow\R$ bounded and continuous.
If $Y$ is a nonnegative random variable, $\E[Y]$ denotes its expectation.
The \emph{indicator function} for a predicate $P$ is defined by
$\Indicator{P}=1$ if $P$ is true, and $\Indicator{P}=0$ otherwise.
We write $\delta_s$ for the Dirac measure supported on $s\in S$, so $\delta_s(A)=\Indicator{s\in A}$ for all $A\in\Borel(S)$.
Furthermore, we use comparison functions, in particular, class $\cKi$-functions.
A function $\alpha \colon \Rnn\rightarrow\Rnn$ is in this class if it is continuous, strictly increasing, $\alpha(0)=0$, and $\lim_{r\rightarrow\infty} \alpha(r)=\infty$.

\subsection{Setup}
We consider an \emph{ensemble} of discrete-time time-invariant control systems 
\begin{equation} \label{eq:system}
    x_+ = f(x,u,\theta),
\end{equation}
described by a transition function $f \colon X \times U \times \Theta \rightarrow X$,
where the state and input spaces are metric spaces $(X,d_X)$, $(U,d_U)$, respectively,
and $(\Theta,d_\Theta)$ is a metric space acting as an index set for the ensemble of systems.
Given an initial state $x_0\in X$, a sequence of inputs $\is{u}\in U^N$, $N\in\Np \cup\{\infty\}$, and an ensemble index $\theta\in\Theta$,
a state trajectory $\s_{\is{u},x_0}^\theta(\cdot)$ is defined recursively as usual,
\begin{align*}
    \s_{\is{u},x_0}^\theta(0) & = x_0, \\
     \s_{\is{u},x_0}^\theta(n+1) & = f(\s_{\is{u},x_0}^\theta(n),\is{u}(n),\theta), \: 0 \leq n < N.
\end{align*}
If the initial state is specified by a map $\es_0 \colon \Theta \rightarrow X$ which is clear from context,
we also write $\s_\is{u}^\theta(n) = \s_{\is{u},\es_0(\theta)}^\theta(n)$.
In the following, we consider a stage cost $\ell \colon X \times U \times \Theta \rightarrow \Rnn$, a control horizon $N\in\Np$, and a terminal cost $F_N \colon X\times\Theta\rightarrow\Rnn$.
For an initial state $x_0\in X$, input sequence $\is{u}\in U^N$, and ensemble index $\theta\in\Theta$, we define the corresponding total cost functional by
\begin{equation} \label{eq:dteoc:totalCostFunctional}
    J_N(x_0,u,\theta) = \sum_{n=0}^{N-1} \ell \big( \s_{\is{u},x_0}^\theta(n), \is{u}(n), \theta \big) + F_N\big( \s_{\is{u},x_0}^\theta(N), \theta \big).
\end{equation}
Finally, in the remainder of this work, we work with stage costs of the form
\begin{equation} \label{eq:dteoc:ecc:formOfEll}
    \ell(x,u,\theta)=\ell_u(u) + \ell_0(x,u,\theta),
\end{equation}
with $\ell_u$ and $\ell_0$ nonnegative, which allows us to use fine-grained continuity assumptions on $\ell$.
For brevity, we also define
\begin{equation*}
     J_N^0(x,u,\theta) = \sum_{n=0}^{N-1} \ell_0 \big( \s_{\is{u},x_0}^\theta(n), \is{u}(n), \theta \big) + F_N\big( \s_{\is{u},x_0}^\theta(N), \theta \big).
\end{equation*}
We consider optimal control problems over a whole ensemble of systems, which in turn means aggregating the individual total cost functionals \eqref{eq:dteoc:totalCostFunctional}, indexed by $\theta\in\Theta$.
In this work, we take the average cost with respect to a probability measure on $\Theta$.
From now on, we endow $X$, $U$, and $\Theta$ with their corresponding Borel $\sigma$-algebras.
Before formulating the actual control problem, we have to make sure that the cost functional is well-defined.
\begin{lemma} \label{lem:dteoc:ecc:functionalWelldefined}
If for all $u\in U$, $f(\cdot,u,\cdot)$ is $\Borel(X)\times\Borel(\Theta)$-$\Borel(X)$ measurable
and $\ell_0(\cdot,u,\cdot)$ is $\Borel(X)\times\Borel(\Theta)$-$\Borel(\Rnn)$ measurable,
and $F_N$ is $\Borel(X)\times\Borel(\Theta)$-$\Borel(\Rnn)$ measurable,
and $\es_0 \colon \Theta \rightarrow X$ is $\Borel(\Theta)$-$\Borel(X)$ measurable,
then for all $\is{u}\in U^N$ the integral
\begin{equation}
    \int_{\Theta} J_N(\es_0(\theta),\is{u},\theta) \diff \mu(\theta)
\end{equation}
is well-defined and takes values in $\Rnn\cup\{\infty\}$.
\end{lemma}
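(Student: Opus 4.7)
The plan is to reduce the claim to a standard measurability argument: once the integrand $\theta \mapsto J_N(\es_0(\theta),\is{u},\theta)$ is shown to be $\Borel(\Theta)$-$\Borel(\Rnn)$ measurable and nonnegative, its Lebesgue integral with respect to the probability measure $\mu$ is automatically well-defined in $\Rnn\cup\{\infty\}$. Thus the entire proof reduces to establishing measurability of the integrand.

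The key step is an induction on the time index $n\in\{0,1,\dots,N\}$ to show that the trajectory map $\theta\mapsto\s^\theta_{\is{u},\es_0(\theta)}(n)$ is $\Borel(\Theta)$-$\Borel(X)$ measurable. The base case $n=0$ is immediate, since this map is precisely $\es_0$, which is measurable by hypothesis. For the inductive step, I would observe that if $g\colon\Theta\rightarrow X$ is $\Borel(\Theta)$-$\Borel(X)$ measurable, then the map $\theta\mapsto(g(\theta),\theta)$ is measurable from $\Theta$ to $X\times\Theta$ when the latter is equipped with the product $\sigma$-algebra $\Borel(X)\otimes\Borel(\Theta)$, since both coordinate projections of this map are measurable. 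Composing this with the jointly measurable map $f(\cdot,\is{u}(n),\cdot)$ gives measurability of $\theta\mapsto f(g(\theta),\is{u}(n),\theta)$, which is exactly the inductive step applied to $g(\theta) = \s^\theta_{\is{u},\es_0(\theta)}(n)$.

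With trajectory measurability in hand, the same composition-with-a-jointly-measurable-function principle shows that $\theta \mapsto \ell_0(\s^\theta_{\is{u},\es_0(\theta)}(n), \is{u}(n), \theta)$ is $\Borel(\Theta)$-$\Borel(\Rnn)$ measurable for $n=0,\dots,N-1$, and likewise $\theta\mapsto F_N(\s^\theta_{\is{u},\es_0(\theta)}(N),\theta)$ is measurable. The remaining summands $\theta\mapsto\ell_u(\is{u}(n))$ are constant in $\theta$, hence trivially measurable and nonnegative. Since a finite sum of nonnegative measurable functions is nonnegative and measurable, the integrand $\theta\mapsto J_N(\es_0(\theta),\is{u},\theta)$ is $\Borel(\Theta)$-$\Borel(\Rnn)$ measurable, and the Lebesgue integral against $\mu\in\Pbs(\Theta)$ is well-defined in $\Rnn\cup\{\infty\}$.

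The only slightly delicate point, and the main obstacle, is the subtle issue that in general metric spaces the Borel $\sigma$-algebra $\Borel(X\times\Theta)$ can be strictly finer than the product $\sigma$-algebra $\Borel(X)\otimes\Borel(\Theta)$. However, the assumptions are formulated in terms of the product $\sigma$-algebra ($\Borel(X)\times\Borel(\Theta)$-$\Borel(X)$ measurability for $f$ and analogously for $\ell_0$ and $F_N$), which is precisely what is needed for the composition argument to go through without any second countability or separability assumption on $X$ or $\Theta$.
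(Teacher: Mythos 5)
Your proposal is correct and follows essentially the same route as the paper's proof, which simply invokes ``compositions of measurable maps are measurable'' plus nonnegativity of $\ell$ and $F_N$; you merely spell out the induction on the time index and the product-$\sigma$-algebra detail that the paper leaves implicit. Your remark that the hypotheses are phrased with respect to $\Borel(X)\times\Borel(\Theta)$ (rather than $\Borel(X\times\Theta)$) is exactly the point that makes the composition argument work without separability assumptions.
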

\begin{proof}
Let $\is{u}\in U^N$ be arbitrary.
Since compositions of measurable maps are measurable, $\theta \mapsto \ell(\s_\is{u}^\theta(n),\is{u}(n),\theta)$ is $\Borel(\Theta)$-$\Borel(\Rnn)$ measurable for $n=0,\ldots,N-1$, and similarly $\theta \mapsto F_N(\s_\is{u}^\theta(N),\theta)$ is $\Borel(\Theta)$-$\Borel(\Rnn)$ measurable.
The result now follows from the fact that $\ell$ and $F_N$ are nonnegative. \qed
\end{proof}
Assuming for the moment that $f$, $\ell$, $F_N$, and $\es_0  \colon \Theta \rightarrow X$ are measurable,
we can define for $\mu\in\Pbs(\Theta)$ the averaged total cost functional $\bar J_N(\es_0,\cdot,\mu) \colon U^N \rightarrow \Rnn \cup \{\infty\}$ by
\begin{equation} \label{eq:dteoc:ecc:averageCostFunctional}
    \bar J_N(\es_0, \is{u}, \mu) = \int_{\Theta} J_N(\es_0(\theta),u,\theta) \diff \mu(\theta).
\end{equation}
In the following, we are interested in the optimal control problem
\begin{equation} \label{eq:dteoc:ecc:OCP}
    \min_{\is{u}\in U^N} \bar J_N(\es_0, \is{u}, \mu),
\end{equation}
and we define the corresponding value function
\begin{equation}
    \bar V_N(\es_0,\mu) = \inf_{\is{u} \in U^N}\bar J_N(\es_0, \is{u}, \mu).
\end{equation}

\subsection{Continuity properties}
We start with a continuity assumption on the transition function.
\begin{assumption} \label{assump:dteoc:continuityOfTransitionFunction}
There exist $\alpha_x,\alpha_u\in\cKi$ such that for all $\theta\in\Theta$ we have
\begin{equation*}
    d_X \big( f(x,u,\theta), f(x',u',\theta) \big) \leq \alpha_x\big( d_X(x,x') \big) + \alpha_u \big( d_U(u,u') \big)
\end{equation*}
for all $x,x'\in X$ and $u,u' \in U$. 
\end{assumption}
The preceding assumption leads to continuity of the states with respect to input sequences, as made precise in the next result.
\begin{lemma} \label{lem:dteoc:continuityOfStates}
Under Assumption \ref{assump:dteoc:continuityOfTransitionFunction}, for all $M\in\Np$ there exist $\alpha^{[0]},\ldots,\alpha^{[M-1]}\in\cKi$ such that
\begin{equation*}
    d_X\big( \s_{\is{u},x_0}^\theta(M),\s_{\is{u}',x_0}^\theta(M) \big) \leq \sum_{m=0}^{M-1} \alpha^{[m]}\big(d_U(\is{u}(m),\is{u}'(m))\big)
\end{equation*}
for all $x_0\in X$, $\theta \in \Theta$, and $\is{u},\is{u}'\in U^M$.
\end{lemma}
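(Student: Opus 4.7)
My plan is to proceed by induction on $M$, constructing the comparison functions $\alpha^{[0]},\ldots,\alpha^{[M-1]}\in\cKi$ explicitly out of $\alpha_x$ and $\alpha_u$ provided by Assumption~\ref{assump:dteoc:continuityOfTransitionFunction}. In the base case $M=1$, both trajectories start from the same initial state $x_0$, so one application of Assumption~\ref{assump:dteoc:continuityOfTransitionFunction} with $x=x'=x_0$, together with $\alpha_x(0)=0$, yields the desired bound with $\alpha^{[0]}:=\alpha_u$.

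For the inductive step, assume the claim holds at horizon $M$ with functions $\alpha_M^{[0]},\ldots,\alpha_M^{[M-1]}\in\cKi$. Unrolling one more step of the dynamics and applying Assumption~\ref{assump:dteoc:continuityOfTransitionFunction} to the pairs $(\s_{\is{u},x_0}^\theta(M),\is{u}(M))$ and $(\s_{\is{u}',x_0}^\theta(M),\is{u}'(M))$, followed by insertion of the induction hypothesis inside $\alpha_x$, gives
\begin{equation*}
    d_X\bigl(\s_{\is{u},x_0}^\theta(M{+}1),\s_{\is{u}',x_0}^\theta(M{+}1)\bigr) \leq \alpha_x\!\left(\sum_{m=0}^{M-1} \alpha_M^{[m]}\bigl(d_U(\is{u}(m),\is{u}'(m))\bigr)\right) + \alpha_u\bigl(d_U(\is{u}(M),\is{u}'(M))\bigr).
\end{equation*}

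To convert $\alpha_x$ of a sum into a sum of class-$\cKi$ functions of the individual input differences, I will invoke the standard weak-triangle estimate: for nonnegative $a_0,\ldots,a_{M-1}$, monotonicity of $\alpha_x$ yields $\alpha_x\bigl(\sum_m a_m\bigr) \leq \alpha_x(M \max_m a_m) \leq \sum_m \alpha_x(M a_m)$. Setting $\alpha_{M+1}^{[m]}(r):=\alpha_x\bigl(M\cdot \alpha_M^{[m]}(r)\bigr)$ for $m<M$ and $\alpha_{M+1}^{[M]}(r):=\alpha_u(r)$ closes the induction, since positive scalar multiples and compositions of class-$\cKi$ functions are again of class $\cKi$. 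The only non-routine ingredient is this splitting of $\alpha_x$ over a sum; the rest is a direct recursion driven by Assumption~\ref{assump:dteoc:continuityOfTransitionFunction}.
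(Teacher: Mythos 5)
Your proof is correct and follows essentially the same route as the paper: induction on $M$, with the base case using $\alpha_x(0)=0$ and the inductive step unrolling one application of $f$, invoking Assumption \ref{assump:dteoc:continuityOfTransitionFunction}, and then splitting $\alpha_x$ of a sum into a sum of class-$\cKi$ functions. The only (harmless) difference is the splitting estimate itself: you use $\alpha_x\bigl(\sum_m a_m\bigr)\leq\sum_m\alpha_x(M a_m)$ via the maximum, whereas the paper iterates the weak triangle inequality $\alpha(s+t)\leq\alpha(2s)+\alpha(2t)$ and ends up with dyadic factors $2^{m+1}$; both produce valid $\cKi$ functions.
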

\begin{proof} 
We use induction on $M$. 
For $M=1$, we have
\begin{align*}
     & d_X \big(\s_{\is{u},x_0}^\theta(1), \s_{\is{u}',x_0}^\theta(1) \big) \\
        & \hspace{0.5cm} = d_X\big( f(x_0,\is{u}(0),\theta), f(x_0,\is{u}'(0),\theta) \big) \\
    & \hspace{0.5cm}\leq \alpha_x(0) + \alpha_u\big( d_U(\is{u}(0),\is{u}'(0)) \big) \\
    & \hspace{0.5cm}= \alpha_u \big( d_U(\is{u}(0),\is{u}'(0)) \big),
\end{align*}
so the claim follows with $\alpha^{[0]}=\alpha_u$.

For $M>1$, we have 
\begin{align*}
     & d_X \big(\s_{\is{u},x_0}^\theta(M-1),\s_{\is{u}',x_0}^\theta(M-1) \big) \\
    & \hspace{0.5cm} \leq \sum_{m=0}^{M-2} \tilde\alpha^{[m]} \big(d_U(\is{u}(m),\is{u}'(m)) \big)
\end{align*}
for some $\tilde\alpha^{[0]},\ldots,\tilde\alpha^{[M-2]}\in\cKi$, so
\begin{align*}
     & d_X \big( s_{\is{u},x_0}^\theta(M), s_{\is{u}',x_0}^\theta(M) \big) \\
     & \hspace{0.5cm} =  d_X \big(\s_{\is{u},x_0}^\theta((M-1)+1), \s_{\is{u}',x_0}^\theta((M-1)+1) \big) \\
        & \hspace{0.5cm}= d_X \big(f(\s_{\is{u},x_0}^\theta(M-1),\is{u}(M-1),\theta), \\
            & \hspace{1cm} f(\s_{\is{u}',x_0}^\theta(M-1),\is{u}'(M-1),\theta) \big) \\
        & \hspace{0.5cm}\leq \alpha_x\big(d_X(\s_{\is{u},x_0}^\theta(M-1), \s_{\is{u}',x_0}^\theta(M-1))\big) \\
            & \hspace{1cm} + \alpha_u \big( \is{u}(M-1),\is{u}'(M-1) \big) \\
        & \hspace{0.5cm}\leq \alpha_x\left(\sum_{m=0}^{M-2} \tilde\alpha^{[m]}(d_U(\is{u}(m),\is{u}'(m)) \right) \\
            & \hspace{1cm} + \alpha_u \big( \is{u}(M-1),\is{u}'(M-1) \big),
\end{align*}
where we used Assumption \ref{assump:dteoc:continuityOfTransitionFunction} in the first inequality.
Recalling that for $\alpha\in\cKi$ a weak triangle inequality holds, i.e., $\alpha(s+t)\leq\alpha(2s) + \alpha(2t)$ for all $s,t\in\Rnn$, cf. \cite[Equation~(8)]{kellett2014compendium}, then
\begin{align*}
    & \alpha_x\left(\sum_{m=0}^{M-2} \tilde\alpha^{[m]}(d_U(\is{u}(m),\is{u}'(m)) \right) \\
    & \hspace{0.5cm} \leq \sum_{m=0}^{M-2} \alpha_x\left(2^{n+1}\tilde\alpha^{[m]}(d_U(\is{u}(m),\is{u}'(m)))\right).
\end{align*}
Moreover, since $s \mapsto \alpha(c s) \in \cKi$ for all $c\in\Rp$ and $\alpha\in\cKi$, and since $\alpha_1 \circ \alpha_2 \in \cKi$ for all $\alpha_1,\alpha_2\in\cKi$, cf. again \cite{kellett2014compendium}, 
the result follows with $\alpha^{[m]}=\alpha_x\left(2^{m+1}\tilde\alpha^{[m]}(\cdot)\right)$ for $m=0,\ldots,M-2$,
and $\alpha^{[M-1]}=\alpha_u$.
\qed
\end{proof}
We continue with a regularity assumption on the stage and terminal cost.
Recall that we consider stage costs of the form \eqref{eq:dteoc:ecc:formOfEll}.
\begin{assumption} \label{assump:dteoc:continuityOfLossFunction}
\begin{enumerate}
    \item There exist $\gamma_x,\gamma_u\in\cKi$ such that
        \begin{equation}
        \begin{split}
            |\ell_0(x,u,\theta)-  & \ell_0(x',u',\theta)| \leq \\
            &\gamma_x(d_X(x,x')) + \gamma_u(d_U(u,u'))
        \end{split}
        \end{equation}
        for all $x,x' \in X, u,u' \in U$, and $\theta \in \Theta$.
    \item $\ell_u$ is lower semicontinuous.
    \item There exists $\gamma_N\in\cKi$ such that
        \begin{equation}
            |F_N(x,\theta)-F_N(x',\theta)| \leq \gamma_N(d_X(x,x'))
        \end{equation}
        for all $x,x'\in X$ and $\theta\in\Theta$.
\end{enumerate}
\end{assumption}
We record a simple consequence of Assumptions \ref{assump:dteoc:continuityOfTransitionFunction} and \ref{assump:dteoc:continuityOfLossFunction}.
\begin{lemma} \label{lem:dteoc:ecc:continuityCosts}
Under Assumptions \ref{assump:dteoc:continuityOfTransitionFunction} and \ref{assump:dteoc:continuityOfLossFunction},
for all $M\in\Np$ there exist $\tilde\alpha_0,\ldots,\tilde\alpha_{M-1}\in\cKi$ and $\tilde\gamma_0,\ldots,\tilde\gamma_M\in\cKi$ such that {\small
\begin{align} \label{eq:ecc:continuityStageCosts}
    |\ell_0(M-1) - \ell_0'(M-1)| \leq \sum_{m=0}^{M-1} \tilde\alpha_m\big(d_U(\is{u}(m),\is{u}'(m))\big)
\end{align} }
where $\ell_0(M-1) = \ell_0(\s_{\is{u},x_0}^\theta(M-1),\is{u}(M-1),\theta)$ and $\ell_0'(M-1)=\ell_0(\s_{\is{u}',x_0}^\theta(M-1),\is{u}'(M-1),\theta)$,
and
\begin{align} \label{eq:ecc:continuityFinalCost}
    & |F_N(\s_{\is{u},x_0}^\theta(M),\theta) - F_N(\s_{\is{u}',x_0}^\theta(M),\theta)| \nonumber \\
    & \hspace{0.5cm} \leq \sum_{m=0}^{M} \tilde\gamma_m\big(d_U(\is{u}(m),\is{u}'(m))\big)
\end{align}
for all $x_0\in X$, $\theta \in \Theta$, and $\is{u},\is{u}'\in U^M$.
\end{lemma}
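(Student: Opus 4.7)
The plan is to combine the Lipschitz-type bounds from Assumption \ref{assump:dteoc:continuityOfLossFunction} with the state-continuity estimate from Lemma \ref{lem:dteoc:continuityOfStates}, and then clean up nested $\cKi$-compositions with exactly the same weak triangle inequality trick already used in the proof of Lemma \ref{lem:dteoc:continuityOfStates}.

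For the stage-cost bound \eqref{eq:ecc:continuityStageCosts}, I would first invoke Assumption \ref{assump:dteoc:continuityOfLossFunction}(1) at the common index $\theta$, yielding
\begin{equation*}
|\ell_0(M-1)-\ell_0'(M-1)|\leq \gamma_x\bigl(d_X(\s_{\is{u},x_0}^\theta(M-1),\s_{\is{u}',x_0}^\theta(M-1))\bigr)+\gamma_u\bigl(d_U(\is{u}(M-1),\is{u}'(M-1))\bigr).
\end{equation*}
Applying Lemma \ref{lem:dteoc:continuityOfStates} at horizon $M-1$ bounds the state distance by a sum $\sum_{m=0}^{M-2}\alpha^{[m]}(d_U(\is{u}(m),\is{u}'(m)))$ with $\alpha^{[m]}\in\cKi$. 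I then push $\gamma_x$ through this sum by iterating the weak triangle inequality $\alpha(s+t)\le\alpha(2s)+\alpha(2t)$, which produces terms of the form $\gamma_x(c_m\alpha^{[m]}(\cdot))$ with $c_m>0$; these are in $\cKi$ because $\cKi$ is closed under composition and under positive rescaling of the argument. Setting $\tilde\alpha_m:=\gamma_x\circ(c_m\alpha^{[m]})\in\cKi$ for $m=0,\ldots,M-2$ and $\tilde\alpha_{M-1}:=\gamma_u$ yields the claimed inequality.

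The terminal-cost bound \eqref{eq:ecc:continuityFinalCost} follows in exactly the same way: Assumption \ref{assump:dteoc:continuityOfLossFunction}(3) replaces the cost difference with $\gamma_N$ of the state distance at time $M$; Lemma \ref{lem:dteoc:continuityOfStates} at horizon $M$ bounds this by $\sum_{m=0}^{M-1}\alpha^{[m]}(d_U(\is{u}(m),\is{u}'(m)))$; and the weak triangle inequality distributes $\gamma_N$ across the sum, yielding $\tilde\gamma_m\in\cKi$. (The upper summation index in the statement appears to extend to $M$, but since $\is{u},\is{u}'\in U^M$ only have entries indexed $0,\ldots,M-1$, the natural bound produces at most $M$ nontrivial terms; any additional $\tilde\gamma_M$ can be taken trivially.)

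No real obstacle is expected; the proof is essentially bookkeeping. The only small care point is tracking the powers of $2$ that accumulate when iterating the weak triangle inequality over a sum of $M$ terms, but these constants do not affect membership in $\cKi$, so no new ideas beyond those already employed in the proof of Lemma \ref{lem:dteoc:continuityOfStates} are required.
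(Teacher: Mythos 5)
Your proposal is correct and follows essentially the same route as the paper's proof: apply Assumption \ref{assump:dteoc:continuityOfLossFunction} to reduce to a state distance, bound that via Lemma \ref{lem:dteoc:continuityOfStates}, and distribute $\gamma_x$ (resp. $\gamma_N$) over the resulting sum with the weak triangle inequality, taking $\tilde\alpha_m=\gamma_x(2^{m+1}\alpha^{[m]}(\cdot))$ and $\tilde\alpha_{M-1}=\gamma_u$. The only cosmetic difference is that the paper treats $M=1$ as a separate base case (where the state distance vanishes and only $\gamma_u$ survives), while you handle it implicitly; your remark about the superfluous $\tilde\gamma_M$ term in \eqref{eq:ecc:continuityFinalCost} is also accurate.
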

\begin{proof}
We use a case distinction on $M$. For $M=1$ we have
\begin{align*}
 |\ell_0(0) - \ell_0'(0)| & = |\ell_0(x_0,\is{u}(0),\theta)-\ell_0(x_0,\is{u}'(0),\theta)| \\
 & \leq \gamma_u(d_U(\is{u}(0), \is{u}'(0))),
\end{align*}
showing the claim with $\tilde\alpha_0=\gamma_u$.
For $M>1$, we have from Lemma \ref{lem:dteoc:continuityOfStates} that 
\begin{align*}
    & |\ell_0(M-1)-\ell_0(M-1)| \\
    & \hspace{0.5cm} \leq \gamma_x(d_X(\s_{\is{u},x_0}^\theta(M-1), \s_{\is{u}',x_0}^\theta(M-1))) \\
        & \hspace{1cm} + \gamma_u(d_U(\is{u}(M-1),\is{u}'(M-1))) \\
    & \hspace{0.5cm} \leq \gamma_x\left(\sum_{m=0}^{M-1} \alpha^{[m]}(d_U(\is{u}(m),\is{u}'(m))) \right) \\
        & \hspace{1cm} + \gamma_u(d_U(\is{u}(M-1),\is{u}'(M-1))),
\end{align*}
where we first used Assumption \ref{assump:dteoc:continuityOfLossFunction} and then Lemma \ref{lem:dteoc:continuityOfStates}.
As in the proof of Lemma \ref{lem:dteoc:continuityOfStates}, we can now repeatedly use the weak triangle inequality for $\cKi$-functions,
which leads to \eqref{eq:ecc:continuityStageCosts} with $\tilde\alpha_m=\gamma_x(2^{m+1}\alpha^{[m]}(\cdot))$ for $m=0,\ldots,M-2$ and $\tilde\alpha_{M-1}=\gamma_u$.
Finally, an analogous argument leads to \eqref{eq:ecc:continuityFinalCost}.
\qed
\end{proof}
\subsection{Background from calculus of variations}
We first recall the general notion of $\Gamma$-convergence. For further details, we recommend the monograph \cite{dalmaso1993introduction}.
\begin{definition} \label{def:Gamma-conv}
    Let $(Z,d_Z)$ be a metric space, and let us consider a sequence of functionals %
    $\mathcal{F}_k\colon Z \to \R \cup \{ +\infty\}$, $k\in\Np$.
    The sequence $(\mathcal{F}_k)_{k\geq 1}$ is said to \emph{$\Gamma$-converge to a functional $\mathcal{F} \colon Z \to \R \cup \{ +\infty\}$ with respect to the $d_Z$-topology} if the following two conditions are satisfied:
    \begin{enumerate}
        \item \underline{$\liminf$-condition} For every $z \in Z$ and for every sequence $(z_k)_{k\geq 1}$ such that $d_Z(z_k,z)\to 0$, we have that
        \begin{equation*}
            \mathcal{F}(z) \leq \liminf_{k} \mathcal{F}_k(z_k);
        \end{equation*}
        \item \underline{$\limsup$-condition} For every $z \in Z$ there exists a sequence $(\bar z_k)_{k\geq 1}$ (also called \emph{recovery sequence}) such that $d_Z(\bar z_k,z)\to 0$ and 
        \begin{equation*}
            \mathcal{F}(z) \geq \limsup_{k} \mathcal{F}_k(z_k).
        \end{equation*}
    \end{enumerate}
\end{definition}
We will also use the direct method of the calculus of variations, cf. \cite[Chapter~1]{dalmaso1993introduction} for more background.
This involves in particular the notion of lower semicontinuity.
\begin{definition}
Let $(Z,d_Z)$ be a metric space and consider a function $g \colon Z \rightarrow \R \cup \{\infty\}$.
We say that $g$ is lower semicontinuous if for all sequences $(z_k)_k\subseteq Z$ such that $\lim_k z_k = z \in Z$, it holds that
\begin{equation}
    g(z) \leq \liminf_k g(z_k).
\end{equation}
\end{definition}
Note that continuity implies lower semicontinuous.
Furthermore, we also need the notion of coercivity.
\begin{definition}
Let $(Z,d_Z)$ be a metric space and consider a function $g: Z \rightarrow \R \cup \{\infty\}$.
We say that $g$ is coercive if for all $t\in\R$, the set $\{z \in Z \mid g(z) < t\}$ is precompact.
\end{definition}
It is clear from this definition that sums of coercive functions are coercive,
and that if a function is lower bounded by a coercive function, it is also coercive.
The next example provides more concrete instances of this definition.
\begin{example} \label{ex:dteco:ecc:coercivity}
If $(Z,d_Z)$ is a locally-compact metric space and $z_0 \in Z$, then $z \mapsto C \big(d_Z(z,z_0)\big)^q$ is coercive for all $C,q\in\Rp$.
In particular, since $(\R^d, \|\cdot\|_p)$ is locally-compact for all $p\in\Rp$, where $\|z\|_p=\sqrt[p]{|v_1|^p + \ldots +|v_d|^p}$, 
the map $\R^d \ni z \mapsto C \|z\|_p^q$ is coercive for all $C,q,p\in\Rp$.
\end{example}

\section{Existence of minimisers} \label{sec:ecc:existenceMinimisers}
Our next goal is to show that the optimal control problem \eqref{eq:dteoc:ecc:OCP} admits minimisers.
We first need a mild measurability assumption.
\begin{assumption} \label{assump:dteoc:ecc:measurability}
For all $u \in U$, $\ell_0(\cdot,u,\cdot)$ is $\Borel(X)\times\Borel(\Theta)$-$\Borel(\Rnn)$ measurable and $f(\cdot,u,\cdot)$ is $\Borel(X)\times\Borel(\Theta)$-$\Borel(X)$ measurable,
and $F_N$ is $\Borel(X)\times\Borel(\Theta)$-$\Borel(\Rnn)$ measurable.
\end{assumption}
\begin{remark}
Since we work with Borel $\sigma$-algebras, Assumption \ref{assump:dteoc:ecc:measurability} is fulfilled if $f,\ell_0$, and $F_N$ are continuous.
\end{remark}
The first ingredient for the direct method of the calculus of variations is lower semicontinuity, which we establish in the next result.
\begin{lemma} \label{lem:dteoc:lscOfCostFunctional}
Under Assumptions \ref{assump:dteoc:continuityOfTransitionFunction}, \ref{assump:dteoc:continuityOfLossFunction}, and \ref{assump:dteoc:ecc:measurability},
for all $\mu \in \Pbs(\Theta)$ and measurable $\es_0 \colon \Theta\rightarrow X$, 
$U^N \ni \is{u} \mapsto \bar J_N(\es_0, \is{u} ,\mu)$ is well-defined and lower semicontinuous w.r.t.~the product topology on $U^N$.
\end{lemma}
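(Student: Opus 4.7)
The plan is to split $\bar J_N(\es_0, \is{u}, \mu)$ into the purely input-dependent part and the trajectory-dependent integral part, i.e.
\begin{equation*}
    \bar J_N(\es_0, \is{u}, \mu) = \sum_{n=0}^{N-1} \ell_u(\is{u}(n)) + \int_\Theta J_N^0(\es_0(\theta), \is{u}, \theta) \diff \mu(\theta),
\end{equation*}
and to show that each summand is lower semicontinuous on $U^N$ with the product topology; the conclusion then follows since a finite sum of $[0,\infty]$-valued lower semicontinuous functions is lower semicontinuous. Well-definedness of the integral for each fixed $\is{u}\in U^N$ is immediate from Lemma~\ref{lem:dteoc:ecc:functionalWelldefined} under Assumption~\ref{assump:dteoc:ecc:measurability}.

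For the first summand, I would observe that each projection $\is{u} \mapsto \is{u}(n)$ is continuous in the product topology, so $\is{u} \mapsto \ell_u(\is{u}(n))$ is lower semicontinuous by Assumption~\ref{assump:dteoc:continuityOfLossFunction}(ii), and the partial sum inherits this property.

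For the integral summand, I would fix a sequence $\is{u}_k \to \is{u}$ in $U^N$ in the product topology, so $d_U(\is{u}_k(m), \is{u}(m)) \to 0$ for each $m = 0, \ldots, N-1$. By Lemma~\ref{lem:dteoc:ecc:continuityCosts}, for each $\theta \in \Theta$ and each $n \in \{0,\ldots,N-1\}$,
\begin{equation*}
    \bigl| \ell_0(\s_{\is{u}_k,\es_0(\theta)}^\theta(n), \is{u}_k(n), \theta) - \ell_0(\s_{\is{u},\es_0(\theta)}^\theta(n), \is{u}(n), \theta) \bigr|
\end{equation*}
and the analogous difference for $F_N$ at stage $N$ are dominated by finite sums of $\cKi$-functions evaluated at $d_U(\is{u}_k(m), \is{u}(m))$. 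Since each $\cKi$-function is continuous with value $0$ at $0$, these bounds tend to $0$ as $k \to \infty$. Summing over $n = 0, \ldots, N-1$ and adding the terminal term yields
\begin{equation*}
    J_N^0(\es_0(\theta), \is{u}_k, \theta) \longrightarrow J_N^0(\es_0(\theta), \is{u}, \theta)
\end{equation*}
for every $\theta \in \Theta$. Because $J_N^0 \geq 0$, Fatou's lemma gives
\begin{equation*}
    \int_\Theta J_N^0(\es_0(\theta), \is{u}, \theta) \diff \mu(\theta) \leq \liminf_{k\to\infty} \int_\Theta J_N^0(\es_0(\theta), \is{u}_k, \theta) \diff \mu(\theta),
\end{equation*}
i.e., the integral summand is lower semicontinuous in $\is{u}$.

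The only delicate point is ensuring that the pointwise convergence produced by Lemma~\ref{lem:dteoc:ecc:continuityCosts} is genuinely $\theta$-wise (the comparison-function bounds there are already uniform in $\theta$, which is more than enough), and confirming measurability of each integrand so that Fatou applies; the latter is exactly what Lemma~\ref{lem:dteoc:ecc:functionalWelldefined} provides under Assumption~\ref{assump:dteoc:ecc:measurability}. Everything else is a routine combination of continuity of $\cKi$-compositions at zero, nonnegativity, and the standard stability of lower semicontinuity under finite sums.
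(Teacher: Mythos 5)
Your proof is correct and takes essentially the same route as the paper's: both decompose $\bar J_N$ into the $\ell_u$-sum (handled by lower semicontinuity of $\ell_u$ and continuity of the coordinate projections) and the trajectory-dependent integral part (handled via the $\cKi$-bounds of Lemma~\ref{lem:dteoc:ecc:continuityCosts} to get $\theta$-wise convergence, followed by Fatou's lemma). The only cosmetic difference is that you apply Fatou once to the aggregated integrand $J_N^0$, whereas the paper applies it separately to each stage-cost integral and the terminal-cost integral; this changes nothing of substance.
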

\begin{proof}
Assumptions  \ref{assump:dteoc:continuityOfTransitionFunction}, \ref{assump:dteoc:continuityOfLossFunction}, and \ref{assump:dteoc:ecc:measurability} together ensure that the measurability requirements of Lemma \ref{lem:dteoc:ecc:functionalWelldefined} are fulfilled, and hence the average cost functional is well-defined.
Let us now fix $\is{u}^\ast \in U^N$ and $(\is{u}^{(k)})_{k\in\Np} \subseteq U^N$ such that $\is{u}^{(k)}\rightarrow \is{u}^\ast$ in the product topology on $U^N$.
Observe that for arbitrary $\is{u}\in U^N$, we have
\begin{align*}
    \bar J_N(\es_0,\is{u},\mu) & = \int_{\Theta} J_N(\es_0(\theta),\is{u},\theta) \diff \mu(\theta) \\
    & = \int_\Theta F_N\big( \s_{\is{u}}^\theta(N), \theta \big) \diff \mu(\theta) 
        + \sum_{n=0}^{N-1} \ell_u(\is{u}(n)) \\
        & \hspace{1cm} + \sum_{n=0}^{N-1} \int_\Theta \ell_0 \big( \s_{\is{u}}^\theta(n), \is{u}(n), \theta \big) \diff \mu(\theta),
\end{align*}
where we used the definitions of the cost functionals in the first two equalities,
the form \eqref{eq:dteoc:ecc:formOfEll} of $\ell$, and the fact that $\mu$ is a probability measure.
Since $\ell_u$ is lower semicontinuous by Assumption~\ref{assump:dteoc:continuityOfLossFunction}, we have 
\begin{align*}
    \sum_{n=0}^{N-1} \ell_u(\is{u}^\ast(n)) & \leq \sum_{n=0}^{N-1} \liminf_{k} \ell_u(\is{u}^{(k)}(n)) \\
        & \leq \liminf_{k} \sum_{n=0}^{N-1} \ell_u(\is{u}^{(k)}(n)).
\end{align*}
Next, for brevity define $\ell_0^\ast(n) =  \ell_0 \big( \s_{\is{u}^\ast}^\theta(n), \is{u}^\ast(n), \theta \big)$ and $\ell_0^{(k)}(n) =  \ell_0 \big( \s_{\is{u}^{(k)}}^\theta(n), \is{u}^{(k)}(n), \theta \big) $.
From Lemma \ref{lem:dteoc:ecc:continuityCosts}, we get that for all $n=0,\ldots,N-1$, there exist $\gamma_0,\ldots,\gamma_{n}\in\cKi$ such that
\begin{equation*}
    \big| \ell_0^\ast(n) - \ell_0^{(k)}(n) \big| \leq \sum_{m=0}^{n} \gamma_m \big(d_U(\is{u}^\ast(m),\is{u}^{(k)}(m)) \big)
\end{equation*}
which implies that for all $n=1,\ldots,N-1$,
\begin{align*}
    & \lim_{k} \sup_{\theta\in\Theta} 
        \Big| \ell_0^{(k)}(n)  - \ell_0^\ast(n) \Big|
     \\
    & \hspace{0.5cm} \leq \lim_{k} \sup_{\theta\in\Theta} 
     \sum_{m=0}^{n-1} \gamma_m \big(d_U(\is{u}^{(k)}(m),\is{u}^\ast(m)) \big) \\
    & \hspace{0.5cm} = \lim_{k} \sum_{m=0}^{n-1} \gamma_m \big(d_U(\is{u}^{(k)}(m),\is{u}^\ast(m)) \big) \\
    & \hspace{0.5cm} = \sum_{m=0}^{n-1} \gamma_m \big( \lim_{k}  d_U (\is{u}^{(k)}(m),\is{u}^\ast(m)) \big) \\
    & \hspace{0.5cm} = 0.
\end{align*}
This shows that for all $n=0,\ldots,N-1$, we have $\ell_0 \big( \s_{\is{u}^{(k)}}^\theta(n), \is{u}^{(k)}(n), \theta \big)  \rightarrow  \ell_0 \big( \s_{\is{u}^\ast}^\theta(n), \is{u}^\ast(n), \theta \big)$ uniformly in $\theta$, and, in particular, for $\mu$-a.e.~$\theta \in \Theta$.
Hence, by Fatou's Lemma, we conclude that
\begin{align*}
    & \int_\Theta \ell \big( \ell_0 \big( \s_{\is{u}^\ast}^\theta(n), \is{u}^\ast(n), \theta \big) \diff \mu(\theta) \\
    & \hspace{0.5cm} \leq \liminf_{k} \int_\Theta \ell_0 \big( \s_{\is{u}^{(k)}}^\theta(n), \is{u}^{(k)}(n), \theta \big) \diff \mu(\theta).
\end{align*}
Using a similar reasoning, we also find that
\begin{align*}
      & \int_\Theta F_N \big( \s_{\is{u}^\ast}^\theta(N), \theta \big) \diff \mu(\theta) \\
      & \hspace{0.5cm} \leq \liminf_{k}  \int_\Theta F_N \big( \s_{\is{u}^{(k)}}^\theta(N), \theta \big) \diff \mu(\theta).
\end{align*}
Altogether, we find that
\begin{align*}
    \bar J_N(\es_0, \is{u}^\ast, \mu) \leq \liminf_{k} \bar J_N(\es_0,\is{u}^{(k)},\mu),
\end{align*}
establishing lower semicontinuity of $\is{u} \mapsto \bar J_N(\es_0,\is{u},\mu)$. \qed
\end{proof}
The second ingredient for the direct method of calculus of variations is coercivity.
The following assumption is sufficient for this.
\begin{assumption} \label{assump:dteoc:coercivityOfStageCost}
There exists a function $r\colon U \rightarrow \Rnn$ that is coercive, and $\ell(x,u,\theta)\geq r(u)$ holds for all $x\in X$, $u \in U$, $\theta \in \Theta$.
\end{assumption}
Since this assumption is somewhat abstract, we provide some concrete instances in the following example.
\begin{example}
    Since according to \eqref{eq:dteoc:ecc:formOfEll} we have $\ell(x,u,\theta)= \ell_0(x,u,\theta) + \ell_u(u)$, where $\ell_0, \ell_u$ take non-negative values,
    Assumption~\ref{assump:dteoc:coercivityOfStageCost} holds whenever $\ell_u$ is coercive.
    Recalling Example \ref{ex:dteco:ecc:coercivity}, if $(U,d_U)$ is a locally-compact metric space, this is the case if we choose $v_0 \in U$ and we set $\ell_u(v) = \lambda \big(d_U(v,v_0)\big)^q$, with $\lambda,q>0$ positive parameters. 
    This is essentially the setting considered in \cite{scagliotti2023optimal}.
    For example, if $(U,d_U)=(\R^d, \|\cdot \|_p)$, then by defining $\ell_u(v) :=  \lambda \|v\|_p^q$ with $\lambda, q>0$ and choosing $\ell_0$ complying with Assumption~\ref{assump:dteoc:continuityOfLossFunction}, it turns out that the coercivity condition of Assumption~\ref{assump:dteoc:coercivityOfStageCost} is satisfied.
\end{example}
We can now establish coercivity of the average cost functional.
\begin{lemma} \label{lem:dteoc:coercivityOfCostFunctional}
Under Assumptions \ref{assump:dteoc:continuityOfTransitionFunction}, \ref{assump:dteoc:continuityOfLossFunction}, \ref{assump:dteoc:ecc:measurability}, and \ref{assump:dteoc:coercivityOfStageCost}, for all $\mu\in\Pbs(\Theta)$ and measurable $\es_0 \colon \Theta\rightarrow X$,
the map $U^N \ni \is{u} \mapsto \bar J_N(\es_0,\is{u},\mu)$ is well-defined and coercive w.r.t. the product topology on $U^N$.
\end{lemma}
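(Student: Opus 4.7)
Well-definedness follows immediately from Lemma~\ref{lem:dteoc:ecc:functionalWelldefined}, since Assumption~\ref{assump:dteoc:ecc:measurability} provides exactly the measurability hypotheses needed. So the plan reduces to verifying coercivity, which I would attack by producing a simple coercive lower bound on $\bar J_N$ and then invoking the observation, made right after the definition of coercivity, that being lower bounded by a coercive function implies coercivity.

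The first step is to construct the lower bound. Using the decomposition \eqref{eq:dteoc:ecc:formOfEll} together with nonnegativity of $\ell_0$, $\ell_u$, and $F_N$, Assumption~\ref{assump:dteoc:coercivityOfStageCost} gives $\ell(x, u, \theta) \geq r(u)$ pointwise. Integrating against $\mu \in \Pbs(\Theta)$, which is a probability measure, and summing over $n = 0, \ldots, N-1$, we obtain
\begin{equation*}
    \bar J_N(\es_0, \is{u}, \mu) \geq \sum_{n=0}^{N-1} r(\is{u}(n)) =: R(\is{u})
\end{equation*}
for every $\is{u} \in U^N$. It now suffices to show that $R \colon U^N \to \Rnn$ is coercive with respect to the product topology on $U^N$.

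The key step is the sublevel set inclusion: since $r \geq 0$, for any $t \in \R$ we have
\begin{equation*}
    \{ \is{u} \in U^N \mid R(\is{u}) < t \} \subseteq \prod_{n=0}^{N-1} \{ u \in U \mid r(u) < t \}.
\end{equation*}
By coercivity of $r$, each factor on the right is precompact in $U$, so its closure is compact. Since we have a \emph{finite} product, the product of these closures is compact in the product topology (by Tychonoff, or elementarily from sequential compactness of finite products of compact metric spaces), and it coincides with the closure of the product. Hence the sublevel set of $R$ is contained in a compact set, so it is precompact, and $R$ is coercive on $U^N$.

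The main subtlety, and the one place I would take care to get right, is the need to work with the \emph{sum} $R$ rather than arguing component-wise: a single term $\is{u} \mapsto r(\is{u}(n))$ is not coercive on $U^N$ in general, because its sublevel sets are of the form $U^n \times \{r < t\} \times U^{N-1-n}$ and the other factors need not be precompact. The sum-based argument above circumvents this cleanly thanks to nonnegativity of $r$.
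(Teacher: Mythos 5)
Your proof is correct and follows essentially the same route as the paper: bound $\bar J_N(\es_0,\is{u},\mu)$ from below by $\sum_{n=0}^{N-1} r(\is{u}(n))$ using nonnegativity of $F_N$, Assumption~\ref{assump:dteoc:coercivityOfStageCost}, and the fact that $\mu$ is a probability measure, then conclude coercivity from the lower bound. The only difference is that the paper simply asserts that $\is{u}\mapsto\sum_{n=0}^{N-1} r(\is{u}(n))$ is coercive on $U^N$, whereas you supply the (correct) sublevel-set argument justifying it.
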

\begin{proof}
As in Lemma \ref{lem:dteoc:coercivityOfCostFunctional}, Assumptions \ref{assump:dteoc:continuityOfTransitionFunction}, \ref{assump:dteoc:continuityOfLossFunction}, and  \ref{assump:dteoc:ecc:measurability} ensure that the map is well-defined.
Since $r$ from Assumption \ref{assump:dteoc:coercivityOfStageCost} is coercive, also the function $U^N \ni \is{u} \mapsto \sum_{n=0}^{N-1} r(\is{u}(n))$ is coercive w.r.t. the product topology on $U^N$.
Furthermore, we have
\begin{align*}
    \bar J_N(\es_0, \is{u}, \mu) & = \int_\Theta J_N(\es_0(\theta),\is{u},\theta) \diff \mu(\theta) \\
    & \geq \int_\Theta \sum_{n=0}^{N-1} \ell \big( \s_{\is{u}}^\theta(n), \is{u}(n), \theta \big) \diff \mu(\theta) \\
    & \geq \int_\Theta \sum_{n=0}^{N-1} r(\is{u}(n)) \diff \mu(\theta) \\
    & = \sum_{n=0}^{N-1} r(\is{u}(n)),
\end{align*}
where we used the definition of the cost functionals, the fact that $F_N$ is nonnegative, Assumption \ref{assump:dteoc:coercivityOfStageCost}, and that $\mu$ is a probability measure.
Altogether, this shows that $U^N \ni \is{u} \mapsto \bar J_N(\es_0,\is{u},\mu)$ is well-defined and coercive. \qed
\end{proof}
We are finally ready to show that the optimal control problem \eqref{eq:dteoc:ecc:OCP} admits minimisers. 
\begin{theorem} \label{thm:dteoc:existenceOfMinimisers}
Under Assumptions \ref{assump:dteoc:continuityOfTransitionFunction}, \ref{assump:dteoc:continuityOfLossFunction}, \ref{assump:dteoc:ecc:measurability}, and \ref{assump:dteoc:coercivityOfStageCost}, for all $\mu \in \Pbs(\Theta)$ and measurable $\es_0\colon \Theta\rightarrow X$,
the map $U^N \ni \is{u} \mapsto \bar J_N(\es_0,\is{u},\mu)$ is well-defined and there exists $\is{u}^\ast \in U^N$ with
\begin{equation}
    \bar J_N(\es_0, \is{u}^\ast, \mu) = \bar V_N(\es_0,\mu) = \min_{\is{u} \in U^N} \bar J_N(\es_0,\is{u},\mu).
\end{equation}
\end{theorem}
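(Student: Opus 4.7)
The plan is a textbook application of the direct method of the calculus of variations, with Lemma~\ref{lem:dteoc:lscOfCostFunctional} and Lemma~\ref{lem:dteoc:coercivityOfCostFunctional} doing essentially all the analytic work. First I would note that well-definedness of $\is{u}\mapsto \bar J_N(\es_0,\is{u},\mu)$ is already contained in those two lemmas (or directly in Lemma~\ref{lem:dteoc:ecc:functionalWelldefined}), since Assumption~\ref{assump:dteoc:ecc:measurability} together with measurability of $\es_0$ ensures the integrand is measurable and nonnegative.

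Next I would dispose of the trivial case: if $\bar V_N(\es_0,\mu)=+\infty$, then every $\is{u}\in U^N$ is a minimiser and there is nothing to prove. So assume $\bar V_N(\es_0,\mu)<\infty$ and pick a minimising sequence $(\is{u}^{(k)})_k\subseteq U^N$ with
\[
    \bar J_N(\es_0,\is{u}^{(k)},\mu) \longrightarrow \bar V_N(\es_0,\mu).
\]
Fix any $t\in\R$ strictly greater than $\bar V_N(\es_0,\mu)$; for all sufficiently large $k$ we have $\bar J_N(\es_0,\is{u}^{(k)},\mu)<t$, so the tail of the minimising sequence lies in the sublevel set $\{\is{u}\in U^N \mid \bar J_N(\es_0,\is{u},\mu)<t\}$, which is precompact in the product topology on $U^N$ by Lemma~\ref{lem:dteoc:coercivityOfCostFunctional}. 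Since $N$ is finite, the product topology on $U^N$ is metrisable (e.g.~by $d_U^N(\is{u},\is{u}')=\max_n d_U(\is{u}(n),\is{u}'(n))$), hence precompactness gives sequential precompactness, and we may extract a subsequence $(\is{u}^{(k_j)})_j$ converging in the product topology to some $\is{u}^\ast\in U^N$.

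Finally, by lower semicontinuity established in Lemma~\ref{lem:dteoc:lscOfCostFunctional},
\[
    \bar J_N(\es_0,\is{u}^\ast,\mu) \leq \liminf_j \bar J_N(\es_0,\is{u}^{(k_j)},\mu) = \bar V_N(\es_0,\mu),
\]
while the reverse inequality $\bar J_N(\es_0,\is{u}^\ast,\mu)\geq \bar V_N(\es_0,\mu)$ holds by definition of the infimum. Hence $\is{u}^\ast$ attains the infimum, the infimum is in fact a minimum, and the claimed identity follows.

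I do not foresee a genuine obstacle, since the two technical hypotheses of the direct method have already been secured by the preceding lemmas; the only small point that deserves care is confirming that the coercivity notion used in Lemma~\ref{lem:dteoc:coercivityOfCostFunctional} (precompactness of sublevel sets in the product topology) yields a convergent subsequence, which is where the metrisability of $U^N$ for finite $N$ is invoked.
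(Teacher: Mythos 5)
Your proposal is correct and follows essentially the same route as the paper: both rest entirely on Lemma~\ref{lem:dteoc:lscOfCostFunctional} (lower semicontinuity) and Lemma~\ref{lem:dteoc:coercivityOfCostFunctional} (coercivity), the only difference being that the paper invokes the direct method as a black box via \cite[Theorem~1.15]{dalmaso1993introduction}, whereas you unfold the standard minimising-sequence argument explicitly (including the correct observations that the $+\infty$ case is trivial and that metrisability of $U^N$ for finite $N$ turns precompactness of sublevel sets into sequential compactness).
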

\begin{proof}
Combining Lemma \ref{lem:dteoc:lscOfCostFunctional} and \ref{lem:dteoc:coercivityOfCostFunctional} shows that $\bar J_N(\es_0, \cdot, \mu)$ is well-defined, lower semicontinuous and coercive, so the claim follows from the direct method of the calculus of variations, cf. \cite[Theorem~1.15]{dalmaso1993introduction}. \qed
\end{proof}

\section{Approximation via $\Gamma$-convergence} \label{sec:ecc:gammaConv}
For a generic $\mu\in\Pbs(\Theta)$, solving \eqref{eq:dteoc:ecc:OCP} is in general intractable due to the integral in the definition of \eqref{eq:dteoc:ecc:averageCostFunctional}.
However, we can simplify the problem by approximating $\mu$ by a more convenient probability measure.
For example, if we use an empirical measure $\hat\mu=\frac1M \sum_{i=1}^M \delta_{\theta_i}$,
problem \eqref{eq:dteoc:ecc:OCP} becomes
\begin{equation} \label{eq:dteoc:ecc:approxOCP}
    \min_{\is{u}\in U^N} \frac1M \sum_{i=1}^M J_N(\es_0(\theta_i),\is{u},\theta_i),
\end{equation}
which is closer to standard problems considered in numerical optimal control.
This leads to the question how good the solution of \eqref{eq:dteoc:ecc:approxOCP} is for the original problem \eqref{eq:dteoc:ecc:OCP}.
As in \cite{scagliotti2023optimal}, we tackle this question using $\Gamma$-convergence.
\begin{remark}
We briefly discuss the existence and construction of empirical probability measures approximating a given Borel probability measure.
If $\Theta$ is separable, which is in particular the case if $\Theta$ is compact, then Varadarajan's Theorem, cf. \cite[Theorem~11.4.1]{dudley2002real}, guarantees that for all $\mu\in\Pbs(\Theta)$ there exists $(\mu_k)_k\subseteq\Pbs(\Theta)$, each $\mu_k$ having only finitely many atoms, with $\mu_k \weak \mu$.
More precisely, if $(X_k)_k$ is a sequence of $\Theta$-valued random variables that are independent and identically distributed according to $\mu$, then $\mu$-a.s. the probability measures $\mu_k = \frac1k \sum_{m=1}^k \delta_{X_k}$ fulfill $\mu_k \weak \mu$.
Note that this allows a data-driven approach, where $\mu$ is unknown, but independent and identically distributed samples thereof are available.
\end{remark}
Consider $\mu \in \Pbs(\Theta)$ and a sequence $(\mu_k)_k\subseteq\Pbs(\Theta)$ with $\mu_k\weak\mu$, as well as a measurable $\es_0: \Theta\rightarrow X$.
Define $\mathcal{F}=\bar J_N(\es_0,\cdot,\mu)$ and $\mathcal{F}_k=\bar J_N(\es_0,\cdot,\mu_k)$, and we interpret $\mathcal{F}_k$ as a sequence of approximations of $\mathcal{F}$.
In this section, we will establish the $\Gamma$-convergence of $\mathcal{F}_k$ to $\mathcal{F}$, and deduce convergence of the minimisers.
\subsection{Assumptions}
Compared to merely establishing existence of minimisers, we need slightly stronger continuity assumptions.
\begin{assumption} \label{assump:dteoc:additionalContinuity}
The maps $\es_0$, $f$, $\ell_0$, and $F_N$ are continuous.
\end{assumption}
\begin{remark} 
Assumption \ref{assump:dteoc:additionalContinuity} implies Assumption \ref{assump:dteoc:ecc:measurability}.
\end{remark}
Before moving on, we provide some intuition for continuity conditions contained in Assumption~\ref{assump:dteoc:additionalContinuity}. 
Let us consider $\theta_\infty \in \Theta$ and a sequence $(\theta_k)_k\subseteq \Theta$ such that $d_\Theta(\theta_k, \theta_\infty )\to 0$. 
If we set $\mu_k = \delta_{\theta_k}$ for every $k\in \Np$ and $\mu = \delta_{\theta_\infty}$, then from a direct computation it follows that $\mu_k \weak \mu $.
In such a scenario, it turns out that $\bar J_N(\bar x_0, \cdot , \mu) = J_N(\bar x_0(\theta_\infty), \cdot , \theta_\infty)$, as well as $\bar J_N(\bar x_0, \cdot , \mu_k) = J_N(\bar x_0(\theta_k), \cdot , \theta_k)$ for every $k \in \Np$. 
Hence, in view of establishing a convergence result of $u^*_k \in \arg \min_{\is{u}\in U^N} J_N(\bar x_0(\theta_k), \is{u}, \theta_k)$ towards $\arg \min_{\is{u}\in U^N} J_N(\bar x_0(\theta_\infty), \is{u}, \theta_\infty)$, the joint continuity in $\theta$ and $\is{u}$ of $(\theta, \is{u})\mapsto J_N(\bar x_0(\theta), \is{u}, \theta)$ plays a pivotal role.
In this regard, Assumption~\ref{assump:dteoc:additionalContinuity} provides sufficient conditions.

Next, we need the following technical assumption.
\begin{assumption} \label{assump:dteoc:convergenceOfIntegralTerms}
Consider the $\Gamma$-convergence setup from above under Assumptions \ref{assump:dteoc:continuityOfTransitionFunction}, \ref{assump:dteoc:continuityOfLossFunction}, and \ref{assump:dteoc:additionalContinuity}.
$\es_0$, $f$, $\ell_0$, and $F_N$, as well as $(\mu_k)_k$ and $\mu$, are such that for all $\is{u}\in U^N$ we have for $n=0,\ldots,N-1$ that
\begin{align*}
    & \lim_k \int_\Theta \ell_0(\s_{\is{u}}^\theta(n),\is{u}(n),\theta) \diff \mu_k(\theta) \nonumber \\
    & \hspace{0.5cm} = \int_\Theta \ell_0(\s_{\is{u}}^\theta(n),\is{u}(n),\theta) \diff \mu(\theta)
\end{align*}
and
\begin{align*}
    \lim_k\int_\Theta F_N(\s_{\is{u}}^\theta(N),\theta) \diff \mu_k(\theta) 
    = \int_\Theta F_N(\s_{\is{u}}^\theta(N),\theta) \diff \mu(\theta).
\end{align*}
\end{assumption}
We now provide more concrete conditions that ensure that Assumption \ref{assump:dteoc:convergenceOfIntegralTerms} holds.
For convenience, define for $n=0,\ldots,N-1$
\begin{equation}
 \varphi_n \colon \Theta \rightarrow \Rnn, \: \varphi_n(\theta)= \ell_0(\s_{\is{u}}^\theta(n),\is{u}(n),\theta)
\end{equation}
and
\begin{equation}
    \varphi_N \colon \Theta\rightarrow \Rnn, \: \varphi_N(\theta)=F_N(\s_{\is{u}}^\theta(N),\theta).
\end{equation}
The first concrete condition is essentially the one used in \cite{scagliotti2023optimal}, cf. the proof of Lemma 3.1 in this reference.
\begin{lemma}
Consider the $\Gamma$-convergence setup from above under Assumptions \ref{assump:dteoc:continuityOfTransitionFunction}, \ref{assump:dteoc:continuityOfLossFunction}, and \ref{assump:dteoc:additionalContinuity}.
If $\Theta$ is compact, then Assumption \ref{assump:dteoc:convergenceOfIntegralTerms} holds.
\end{lemma}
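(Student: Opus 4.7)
The plan is to reduce both integral convergences directly to the definition of weak convergence of probability measures, which requires exhibiting bounded continuous integrands on $\Theta$. Fix an arbitrary input sequence $\is{u}\in U^N$. The functions we must analyse are $\varphi_n(\theta)=\ell_0(\s_{\is{u}}^\theta(n),\is{u}(n),\theta)$ for $n=0,\ldots,N-1$ and $\varphi_N(\theta)=F_N(\s_{\is{u}}^\theta(N),\theta)$. Since Assumption~\ref{assump:dteoc:additionalContinuity} gives continuity of $\es_0$, $f$, $\ell_0$, and $F_N$, the task reduces to tracking continuity through the recursion defining the state trajectory and then exploiting compactness.

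First, I would prove by induction on $n$ that $\theta \mapsto \s_{\is{u}}^\theta(n)$ is continuous from $\Theta$ into $X$. The base case $\s_{\is{u}}^\theta(0)=\es_0(\theta)$ is continuous by assumption. For the inductive step, the map $\theta \mapsto \s_{\is{u}}^\theta(n+1)=f(\s_{\is{u}}^\theta(n),\is{u}(n),\theta)$ is the composition of the continuous map $\theta \mapsto (\s_{\is{u}}^\theta(n),\is{u}(n),\theta)$ (note that the middle component is just a constant since $\is{u}$ is fixed) with the continuous map $f$, hence continuous. Consequently, $\varphi_n$ is continuous on $\Theta$ for each $n=0,\ldots,N-1$, as the composition $\theta \mapsto (\s_{\is{u}}^\theta(n),\is{u}(n),\theta) \mapsto \ell_0(\s_{\is{u}}^\theta(n),\is{u}(n),\theta)$, and likewise $\varphi_N$ is continuous.

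Since $\Theta$ is compact, each continuous $\varphi_n \colon \Theta \to \Rnn$ (for $n=0,\ldots,N$) attains its supremum and is therefore bounded. With $\varphi_n$ bounded and continuous and $\mu_k \weak \mu$, the definition of weak convergence of Borel probability measures yields
\begin{equation*}
    \lim_k \int_\Theta \varphi_n(\theta) \diff \mu_k(\theta) = \int_\Theta \varphi_n(\theta) \diff \mu(\theta)
\end{equation*}
for every $n=0,\ldots,N$, which is precisely the content of Assumption~\ref{assump:dteoc:convergenceOfIntegralTerms}.

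The argument contains no genuine obstacle; the only point requiring care is the inductive continuity of the state trajectory in $\theta$, for which the fact that the input sequence $\is{u}$ is fixed is essential (so that only the $\theta$-slot of $f$ carries variation beyond the previously-established continuity of $\s_{\is{u}}^\theta(n)$). Compactness of $\Theta$ is used exclusively to upgrade continuity of the integrands to boundedness, so that weak convergence applies without further integrability hypotheses.
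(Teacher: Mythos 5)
Your proof is correct and follows essentially the same route as the paper: establish continuity of each $\varphi_n$ by composition (you just spell out the induction on the state trajectory explicitly), use compactness of $\Theta$ to get boundedness, and conclude via the definition of weak convergence. No gaps.
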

\begin{proof}
Observe that $\varphi_0,\varphi_1,\ldots,\varphi_N$ are all continuous functions since they are compositions of continuous functions.
Since $\Theta$ is compact, $\varphi_0,\varphi_1,\ldots,\varphi_N$ are all bounded,
and hence $\int_\Theta \varphi_n(\theta)\diff\mu_k(\theta)\rightarrow\int_\Theta \varphi_n(\theta)\diff\mu(\theta)$ follows by the definition of weak convergence of probability measures. \qed
\end{proof}
The next result states that uniform integrability of $\varphi_0,\varphi_1,\ldots,\varphi_N$ is enough to ensure that Assumption \ref{assump:dteoc:convergenceOfIntegralTerms} holds.
This is essentially the most general condition possible, cf. \cite[Lemma~5.11]{kallenberg2021foundations}. 
\begin{lemma}
Consider the $\Gamma$-convergence setup from above under Assumptions \ref{assump:dteoc:continuityOfTransitionFunction}, \ref{assump:dteoc:continuityOfLossFunction}, and \ref{assump:dteoc:additionalContinuity}.
If for $n=0,\ldots,N$ we have that
\begin{equation}
     \lim_{M\rightarrow\infty} \limsup_{k} \int_\Theta \varphi_n(\theta) \Indicator{\varphi_n(\theta)>M} \diff \mu_k(\theta) = 0,
\end{equation}
then Assumption \ref{assump:dteoc:convergenceOfIntegralTerms} holds.
\end{lemma}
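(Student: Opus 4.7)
The plan is to use the classical truncation argument that converts weak convergence plus uniform integrability into convergence of integrals of unbounded continuous functionals. I will prove the two claimed convergences for each $\varphi_n$, $n=0,\ldots,N$, simultaneously, since they have identical structure.

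First, I would observe that under Assumption~\ref{assump:dteoc:additionalContinuity} each $\varphi_n$ is continuous and nonnegative, because it is built from compositions of the continuous maps $\bar x_0$, $f$, $\ell_0$, and $F_N$ together with the fixed input sequence $\underline{u}$. Then, for $M > 0$, I would introduce the truncation $\varphi_n^M(\theta) := \min\{\varphi_n(\theta), M\}$, which is continuous and bounded by $M$. By the very definition of weak convergence of probability measures,
\begin{equation*}
    \lim_k \int_\Theta \varphi_n^M(\theta) \diff \mu_k(\theta) = \int_\Theta \varphi_n^M(\theta) \diff \mu(\theta).
\end{equation*}

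The next step is to control the truncation error. Since $\varphi_n \geq 0$ is continuous, the set $\{\varphi_n > M\}$ is open, so $\varphi_n \cdot \Indicator{\varphi_n > M}$ is lower semicontinuous and nonnegative. Applying the Portmanteau theorem for weak convergence of probability measures to this lower semicontinuous nonnegative function yields
\begin{equation*}
    \int_\Theta \varphi_n \Indicator{\varphi_n > M}\diff \mu
    \leq \liminf_k \int_\Theta \varphi_n \Indicator{\varphi_n > M}\diff \mu_k,
\end{equation*}
so the hypothesis of the lemma gives $\lim_{M\to\infty} \int_\Theta \varphi_n \Indicator{\varphi_n > M} \diff \mu = 0$ as well. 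Using the pointwise bound $\varphi_n - \varphi_n^M = (\varphi_n - M)^+ \leq \varphi_n \Indicator{\varphi_n > M}$, I obtain for every $M>0$ the triangle-type estimate
\begin{equation*}
    \left| \int_\Theta \varphi_n \diff\mu_k - \int_\Theta \varphi_n \diff\mu \right|
    \leq \left| \int_\Theta \varphi_n^M \diff\mu_k - \int_\Theta \varphi_n^M \diff\mu \right|
        + \int_\Theta \varphi_n \Indicator{\varphi_n > M} \diff\mu_k
        + \int_\Theta \varphi_n \Indicator{\varphi_n > M} \diff\mu.
\end{equation*}

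Finally, I would take $\limsup$ in $k$: the first term on the right vanishes by the bounded-continuous convergence established above, and the resulting bound reads
\begin{equation*}
    \limsup_k \left| \int_\Theta \varphi_n \diff\mu_k - \int_\Theta \varphi_n \diff\mu \right|
    \leq \limsup_k \int_\Theta \varphi_n \Indicator{\varphi_n > M} \diff\mu_k
        + \int_\Theta \varphi_n \Indicator{\varphi_n > M} \diff\mu.
\end{equation*}
Letting $M\to\infty$, both remaining terms tend to zero by the hypothesis and by the consequence drawn from Portmanteau above, respectively. Hence $\int_\Theta \varphi_n \diff\mu_k \to \int_\Theta \varphi_n \diff\mu$, which is exactly the desired convergence for $n=0,\ldots,N-1$ and for $n=N$, establishing Assumption~\ref{assump:dteoc:convergenceOfIntegralTerms}.

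The only subtlety I anticipate is the lower semicontinuity argument that lets me propagate the uniform integrability bound from the approximating measures $\mu_k$ to the limit measure $\mu$; writing $\Indicator{\varphi_n > M}$ rather than $\Indicator{\varphi_n \geq M}$ is essential so that the set is open and Portmanteau applies directly. Everything else is a routine truncation estimate.
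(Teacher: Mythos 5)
Your proof is correct, but it is more self-contained than the paper's. The paper reformulates the claim probabilistically: it introduces random variables $\zeta_k$ and $\zeta$ with laws $\mu_k$ and $\mu$, applies the continuous mapping theorem to get $\varphi_n(\zeta_k)\to\varphi_n(\zeta)$ in distribution, recognises the hypothesis as uniform integrability of the sequence $(\varphi_n(\zeta_k))_k$, and then cites \cite[Lemma~5.11]{kallenberg2021foundations} for the implication ``convergence in distribution plus uniform integrability implies convergence of expectations''. You instead prove that implication from scratch: the truncation $\varphi_n^M=\min\{\varphi_n,M\}$ handles the bounded part via the definition of weak convergence, the hypothesis controls the tails under $\mu_k$, and the one genuinely nontrivial step is your use of the Portmanteau inequality for the nonnegative lower semicontinuous function $\varphi_n\Indicator{\varphi_n>M}$ to transfer the tail bound to the limit measure $\mu$; your observation that the strict inequality makes $\{\varphi_n>M\}$ open, hence this function lower semicontinuous, is exactly the point most easily gotten wrong, and you handle it correctly. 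Your route buys a self-contained argument and makes explicit that $\int_\Theta\varphi_n\diff\mu<\infty$, which the paper leaves implicit; the paper's route buys brevity by outsourcing the truncation argument to a standard reference. One minor remark: the $\limsup$ hypothesis does not force $\int_\Theta\varphi_n\diff\mu_k$ to be finite for every $k$, but since all quantities are nonnegative and you only take a $\limsup$ in $k$ at the end, finitely many exceptional indices are harmless.
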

\begin{proof}
Consider $\Theta$-valued random variables $\zeta_k$, $\zeta$ with law $\mu_k$ and $\mu$, respectively, so that by construction, $\zeta_k \rightarrow \zeta$ in distribution.
For $n=0,\ldots,N$, define $\xi_k=\varphi_n(\zeta)$ and $\xi=\varphi_n(\zeta)$. 
Since $\varphi_n$ is continuous, by the continuous mapping theorem we get that also $\xi_k \rightarrow \xi$ in distribution.
The condition of the result now becomes
\begin{equation*}
    \lim_{M\rightarrow\infty} \limsup_{k} \E[\xi_k \Indicator{\xi_k >M}] = 0,
\end{equation*}
which means that $(\xi_k)_k$ is uniformly integrable,
which in turn implies that
\begin{equation*}
    \int_\Theta \varphi_n(\theta) \diff \mu_k(\theta) = \E[\xi_k]
    \rightarrow
    \E[\xi] = \int_\Theta \varphi_n(\theta) \diff \mu(\theta),
\end{equation*}
which shows that Assumption \ref{assump:dteoc:convergenceOfIntegralTerms} is fulfilled. \qed
\end{proof}
\subsection{Convergence results}
We are now ready to state the $\Gamma$-convergence result.
\begin{theorem} \label{thm:dteoc:GammaConvergence}
Under Assumptions \ref{assump:dteoc:continuityOfTransitionFunction}, \ref{assump:dteoc:continuityOfLossFunction}, \ref{assump:dteoc:additionalContinuity}, and \ref{assump:dteoc:convergenceOfIntegralTerms}, 
the sequence of functionals $\left( \bar J_N(\bar x_0, \cdot, \mu_k) \right)_{k\geq 1}$ is $\Gamma$-convergent to the functional $U^N\ni u \mapsto \bar J_N(\bar x_0, u, \mu)$ with respect to the product topology of $U^N$.
\end{theorem}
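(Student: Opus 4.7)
The plan is to verify the two conditions of Definition \ref{def:Gamma-conv} directly, exploiting the decomposition of $\bar J_N(\es_0,\is{u},\mu)$ used in the proof of Lemma \ref{lem:dteoc:lscOfCostFunctional}, namely
\begin{equation*}
    \bar J_N(\es_0,\is{u},\mu) = \sum_{n=0}^{N-1}\ell_u(\is{u}(n)) + \sum_{n=0}^{N-1} I_n(\is{u},\mu) + I_N(\is{u},\mu),
\end{equation*}
where $I_n(\is{u},\mu) = \int_\Theta \ell_0(\s_\is{u}^\theta(n),\is{u}(n),\theta)\diff\mu(\theta)$ for $n<N$ and $I_N(\is{u},\mu) = \int_\Theta F_N(\s_\is{u}^\theta(N),\theta)\diff\mu(\theta)$. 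All three summands are nonnegative, so $\liminf$ distributes over the sum without $\infty - \infty$ issues.

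For the $\limsup$-condition, I take the constant recovery sequence $\bar{\is{u}}^{(k)} = \is{u}$. Then the $\ell_u$-terms are independent of $k$, while Assumption \ref{assump:dteoc:convergenceOfIntegralTerms} directly yields $I_n(\is{u},\mu_k) \to I_n(\is{u},\mu)$ for each $n=0,\ldots,N$. Summing gives $\lim_k \bar J_N(\es_0,\is{u},\mu_k) = \bar J_N(\es_0,\is{u},\mu)$, which implies the required $\limsup$ inequality (with equality).

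For the $\liminf$-condition, fix $\is{u}^\ast \in U^N$ and $\is{u}^{(k)} \to \is{u}^\ast$ in the product topology. The $\ell_u$-terms are handled by lower semicontinuity exactly as in Lemma \ref{lem:dteoc:lscOfCostFunctional}: $\sum_n \ell_u(\is{u}^\ast(n)) \leq \liminf_k \sum_n \ell_u(\is{u}^{(k)}(n))$. For each integral term, I add and subtract the ``frozen'' integrand evaluated at $\is{u}^\ast$:
\begin{equation*}
    I_n(\is{u}^{(k)},\mu_k) = \big[I_n(\is{u}^{(k)},\mu_k) - I_n(\is{u}^\ast,\mu_k)\big] + I_n(\is{u}^\ast,\mu_k).
\end{equation*}
The second summand converges to $I_n(\is{u}^\ast,\mu)$ by Assumption \ref{assump:dteoc:convergenceOfIntegralTerms}. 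The first summand is controlled by the uniform-in-$\theta$ estimate from Lemma \ref{lem:dteoc:ecc:continuityCosts}: its absolute value is bounded by $\sum_{m=0}^{n}\tilde\alpha_m(d_U(\is{u}^{(k)}(m),\is{u}^\ast(m)))$ (and analogously with $\tilde\gamma_m$ for the terminal term), which is independent of $\theta$ and hence unchanged after integration against the probability measure $\mu_k$, and converges to $0$ by continuity of the $\cKi$-functions at $0$. Combining these facts gives $\lim_k I_n(\is{u}^{(k)},\mu_k) = I_n(\is{u}^\ast,\mu)$ for every $n$, and assembling the three pieces yields $\bar J_N(\es_0,\is{u}^\ast,\mu) \leq \liminf_k \bar J_N(\es_0,\is{u}^{(k)},\mu_k)$.

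The main delicate point is the first summand in the add-and-subtract step: because $\mu_k$ itself changes with $k$, one cannot invoke weak convergence against the $k$-dependent integrand $\ell_0(\s_{\is{u}^{(k)}}^\theta(n),\is{u}^{(k)}(n),\theta)$ directly. The uniform-in-$\theta$ bound from Lemma \ref{lem:dteoc:ecc:continuityCosts} is precisely what allows us to decouple the two sources of perturbation (input sequence and measure), so that weak convergence is only used on the fixed integrand at $\is{u}^\ast$; this is the crux of the argument and the reason Assumption \ref{assump:dteoc:continuityOfLossFunction} is stated with a modulus of continuity uniform in $\theta$ rather than merely pointwise.
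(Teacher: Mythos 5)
Your proof is correct, and its overall architecture coincides with the paper's: the same three-term decomposition of $\bar J_N$, the constant recovery sequence for the $\limsup$-condition, and the uniform-in-$\theta$ estimate from Lemma \ref{lem:dteoc:ecc:continuityCosts} to decouple the perturbation of the input sequence from the perturbation of the measure. The one genuine difference is in how you close the $\liminf$-inequality for the integral terms: after the add-and-subtract step you invoke Assumption \ref{assump:dteoc:convergenceOfIntegralTerms} to get full convergence $I_n(\is{u}^\ast,\mu_k)\to I_n(\is{u}^\ast,\mu)$, whereas the paper only needs the one-sided bound $\int_\Theta \ell_0^\ast(n;\theta)\diff\mu(\theta)\leq \liminf_k \int_\Theta \ell_0^\ast(n;\theta)\diff\mu_k(\theta)$, which it obtains from Fatou's lemma for weak convergence of nonnegative continuous integrands (requiring only $\mu_k\weak\mu$, not Assumption \ref{assump:dteoc:convergenceOfIntegralTerms}). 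Both arguments are valid under the theorem's hypotheses, and yours even yields the stronger conclusion that $\lim_k I_n(\is{u}^{(k)},\mu_k)=I_n(\is{u}^\ast,\mu)$; what the paper's route buys is the observation, recorded in the remark following the theorem, that Assumption \ref{assump:dteoc:convergenceOfIntegralTerms} is needed only for the $\limsup$-condition --- a refinement that your proof does not exhibit, since you use that assumption on both sides. Your closing paragraph correctly identifies the crux (the uniform modulus of continuity in $\theta$ is what permits testing weak convergence only against the frozen integrand), which is precisely the role it plays in the paper as well.
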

\begin{proof}
\underline{$\liminf$-condition} Consider $\is{u}^\ast\in U^N$ and $(\is{u}^{(k)})_k\subseteq U^N$ with $\is{u}^{(k)}\rightarrow u$ in the product topology on $U^N$.
We have
\begin{align*}
    \bar J_N(\es_0,\is{u}^\ast,\mu) & = \int_{\Theta} J_N(\es_0(\theta),\is{u}^\ast,\theta) \diff \mu(\theta) \\
    & = \int_\Theta F_N\big( \s_{\is{u}^\ast}^\theta(N), \theta \big) \diff \mu(\theta) 
        + \sum_{n=0}^{N-1} \ell_u(\is{u}^\ast(n)) \\
        & \hspace{1cm} + \sum_{n=0}^{N-1} \int_\Theta \ell_0 \big( \s_{\is{u}^\ast}^\theta(n), \is{u}^\ast(n), \theta \big) \diff \mu(\theta),
\end{align*}
and we consider each of the three remaining terms in turn.
Since $\ell_u$ is lower semicontinuous, we get
\begin{align*}
    \sum_{n=0}^N \ell_u(\is{u}^\ast(n)) & \leq \sum_{n=0}^N \liminf_k \ell_u(\is{u}^{(k)}(n)) \\
        & \leq \liminf_k  \sum_{n=0}^N \ell_u(\is{u}^{(k)}(n)).
\end{align*}
Next, for brevity define 
$\ell_0^\ast(n;\theta) =  \ell_0 \big( \s_{\is{u}^\ast}^\theta(n), \is{u}^\ast(n), \theta \big)$ 
and 
$\ell_0^{(k)}(n;\theta) =  \ell_0 \big( \s_{\is{u}^{(k)}}^\theta(n), \is{u}^{(k)}(n), \theta \big) $ for $n=0,\ldots,N-1$ and $\theta\in\Theta$.
As in the proof of Lemma \ref{lem:dteoc:lscOfCostFunctional}, there are $\tilde \gamma_0,\ldots,\tilde \gamma_n\in\cKi$ with
$|\ell_0^\ast(n;\theta)-\ell_0^{(k)}(n;\theta)|\leq \sum_{m=0}^n \tilde \gamma_m(d_U(\is{u}^\ast(m), \is{u}^{(k)}(m)) =: R^{(k)}_n$,
and since $\is{u}^{(k)}\rightarrow \is{u}^\ast$ in the product topology, the right-hand side converges to zero for $k\rightarrow\infty$.
For $k\in\Np$ we now have
\begin{align*}
     \int_\Theta \ell_0^\ast(n;\theta) \diff \mu_k(\theta)
     \leq \int_\Theta\ell_0^{(k)}(n;\theta) \diff \mu_k(\theta) + R^{(k)}_n
\end{align*}
and a fortiori
\begin{align*}
    &  \int_\Theta \ell_0^\ast(n;\theta) \diff \mu_k(\theta)
        \leq \liminf_k \int_\Theta \ell_0^\ast(n;\theta) \diff \mu_k(\theta) \\
    & \hspace{0.5cm} \leq \liminf_k \int_\Theta \ell_0^{(k)}(n;\theta) \diff \mu_k(\theta)
        + R^{(k)}_n \\
    & \hspace{0.5cm} =  \liminf_k \int_\Theta \ell_0^{(k)}(n;\theta) \diff \mu_k(\theta),
\end{align*} 
where in the first step we used Fatou's lemma for distributional convergence of nonnegative random variables, cf. \cite[Lemma~5.11]{kallenberg2021foundations},
which is applicable due to the continuity properties from Assumptions \ref{assump:dteoc:continuityOfTransitionFunction}, \ref{assump:dteoc:coercivityOfStageCost}, and \ref{assump:dteoc:additionalContinuity} and the fact that $\mu_k \weak \mu$.

Finally, we can use the same arguments again to get that
$    \int_\Theta F_N\big( \s_{\is{u}^\ast}^\theta(N), \theta \big) \diff \mu(\theta) 
     \leq
    \liminf_k \int_\Theta F_N\big( \s_{\is{u}^{(k)}}^\theta(N), \theta \big) \diff \mu_k(\theta)$.
Combining everything, we get that
\begin{align*}
    & \bar J_N(\es_0, u, \mu) \leq  \liminf_k \int_\Theta F_N\big( \s_{\is{u}^{(k)}}^\theta(N), \theta \big) \diff \mu_k(\theta) \\
        & \hspace{1cm} +  \sum_{n=0}^{N-1} \liminf_k \int_\Theta  \ell_0 \big( \s_{\is{u}^{(k)}}^\theta(n), \is{u}^{(k)}(n), \theta \big) \diff \mu_k(\theta) \\
        & \hspace{1cm} + \liminf_k  \sum_{n=0}^N \ell_u(\is{u}^{(k)}(n)) \\
    & \hspace{0.5cm} \leq \liminf_k \int_\Theta J_N(\es_0(\theta),u_k,\theta) \diff \mu_k(\theta) \\
    & \hspace{0.5cm} = \liminf_k \bar J_N(\es_0, u_k, \mu_k),
\end{align*}
establishing the $\liminf$-inequality.

\underline{$\limsup$-condition} Let $\is{u}^\ast \in U^N$ be arbitrary. We define a sequence $(\is{u}^{(k)})_k\subseteq U^N$ by setting $\is{u}^{(k)}=\is{u}^\ast$, so trivially $\is{u}^{(k)}\rightarrow \is{u}^\ast$ in the product topology on $U^N$.
Observe that
\begin{align*}
    & \bar J_N(\es_0,\is{u}^{(k)},\mu_k) = \int_\Theta F_N(\s_{\is{u}^\ast}^\theta(N),\theta) \diff \mu_k(\theta) \\
        & \hspace{0.5cm}   + \sum_{n=0}^{N-1} \ell_u(\is{u}^\ast(n)) + \sum_{n=0}^{N-1} \int_\Theta \ell_0(\s_{\is{u}^\ast}^\theta(n),\is{u}^\ast(n),\theta) \diff \mu_k(\theta),
\end{align*}
and from Assumption \ref{assump:dteoc:convergenceOfIntegralTerms} we have that, for $k\to \infty$, the integral
$\int_\Theta \ell_0(\s_{\is{u}^\ast}^\theta(n),\is{u}^\ast(n),\theta)  \diff \mu_k(\theta)$ tends to $  \int_\Theta \ell_0(\s_{\is{u}^\ast}^\theta(n),\is{u}^\ast(n),\theta)  \diff \mu(\theta)$ for all $n=0,\ldots,N-1$,
and $\int_\Theta F_N(\s_{\is{u}^\ast}^\theta(N),\theta) \diff \mu_k(\theta)
    \to 
    \int_\Theta F_N(\s_{\is{u}^\ast}^\theta(N),\theta) \diff \mu(\theta)$.
Altogether, we obtain that $\limsup_k \bar J_N(\es_0,\is{u}_k,\mu_k) = \lim_k \bar J_N(\es_0,\is{u}^\ast,\mu_k) = \bar J_N(\es_0,\is{u}^\ast,\mu)$,
showing the $\limsup$-condition. \qed
\end{proof}
\begin{remark}
Inspecting the preceding proof reveals that we needed Assumption \ref{assump:dteoc:convergenceOfIntegralTerms} only for the $\limsup$-condition,
since for the $\liminf$-condition, we could use Fatou's lemma for distributional convergence of nonnegative random variables, which does not require any additional assumptions (like boundedness, or more generally uniform integrability) on the test functions.
\end{remark}
As an application of Theorem \ref{thm:dteoc:GammaConvergence}, we show convergence of minimisation problems and minimisers.
\begin{corollary} \label{cor:ecc:convOfMin}
Consider the situation of Theorem \ref{thm:dteoc:GammaConvergence}, and let in addition Assumption \ref{assump:dteoc:coercivityOfStageCost} hold.
We have
\begin{equation}
    \inf_{\is{u} \in U^N} \bar J_N(\es_0,\is{u},\mu) = \lim_k \inf_{\is{u} \in U^N} \bar J_N(\es_0, \is{u}, \mu_k).
\end{equation}
Furthermore, for $k\in\Np$, let $\is{u}^{(k)} \in U^N$ be any minimiser of $\is{u} \mapsto \bar J_N(\es_0,\is{u},\mu_k)$ (there always exists one due to Theorem \ref{thm:dteoc:existenceOfMinimisers}).
Then, the sequence $(\is{u})_k$ is pre-compact and, if any limiting point $\is{u}^\ast\in U^N$ is a minimiser of $\is{u}\mapsto \bar J_N(\es_0,\is{u},\mu)$.
\end{corollary}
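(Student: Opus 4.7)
The plan is to apply the standard result relating $\Gamma$-convergence with equi-coercivity, which yields both convergence of infima and characterisation of limits of minimisers, cf. \cite[Chapter~7]{dalmaso1993introduction}. Since Theorem \ref{thm:dteoc:GammaConvergence} already delivers the $\Gamma$-convergence of $\bar J_N(\es_0,\cdot,\mu_k)$ to $\bar J_N(\es_0,\cdot,\mu)$, the only substantial ingredient left to assemble is the equi-coercivity of this sequence.

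To produce equi-coercivity, I would revisit the proof of Lemma \ref{lem:dteoc:coercivityOfCostFunctional} and note that the lower bound obtained there, namely
\[
\bar J_N(\es_0, \is{u}, \nu) \geq \sum_{n=0}^{N-1} r(\is{u}(n)),
\]
actually holds for \emph{every} $\nu \in \Pbs(\Theta)$, since its derivation uses only nonnegativity of $F_N$, Assumption \ref{assump:dteoc:coercivityOfStageCost}, and the fact that $\nu(\Theta)=1$. The functional $\is{u} \mapsto \sum_{n=0}^{N-1} r(\is{u}(n))$ is coercive on $U^N$ with respect to the product topology, so the entire family $\{ \bar J_N(\es_0, \cdot, \mu_k)\}_k \cup \{\bar J_N(\es_0, \cdot, \mu)\}$ admits a common coercive minorant; this is exactly equi-coercivity.

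Given equi-coercivity plus the $\Gamma$-convergence from Theorem \ref{thm:dteoc:GammaConvergence}, the identity $\inf \bar J_N(\es_0,\cdot,\mu) = \lim_k \inf \bar J_N(\es_0,\cdot,\mu_k)$ is immediate. For the pre-compactness of a sequence $(\is{u}^{(k)})_k$ of minimisers, I would argue as follows: if for every $\is{u}_0 \in U^N$ one has $\bar J_N(\es_0,\is{u}_0,\mu)=+\infty$ the claim is vacuous, so assume there exists $\is{u}_0$ with $\bar J_N(\es_0,\is{u}_0,\mu)<\infty$; Assumption \ref{assump:dteoc:convergenceOfIntegralTerms} then yields $\bar J_N(\es_0,\is{u}_0,\mu_k) \to \bar J_N(\es_0,\is{u}_0,\mu)$, so this sequence is uniformly bounded in $k$. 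By minimality, $\bar J_N(\es_0,\is{u}^{(k)},\mu_k)$ is uniformly bounded as well, so the $\is{u}^{(k)}$ lie in a common sub-level set of the coercive minorant and hence in a pre-compact subset of $U^N$. For any cluster point $\is{u}^\ast = \lim_j \is{u}^{(k_j)}$, the $\liminf$-condition combined with the convergence of infima yields $\bar J_N(\es_0,\is{u}^\ast,\mu) \leq \liminf_j \bar J_N(\es_0,\is{u}^{(k_j)},\mu_{k_j}) = \lim_k \inf_{\is{u}} \bar J_N(\es_0,\is{u},\mu_k) = \inf_{\is{u}} \bar J_N(\es_0,\is{u},\mu)$, so $\is{u}^\ast$ is a minimiser.

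The argument is a textbook application of the calculus-of-variations machinery once Theorem \ref{thm:dteoc:GammaConvergence} is in hand, and presents no genuine obstacle; the one point that deserves explicit verification is that the coercive lower bound from Lemma \ref{lem:dteoc:coercivityOfCostFunctional} is \emph{independent} of the probability measure, since this is precisely what converts the individual coercivity of each functional into the joint equi-coercivity required for the standard $\Gamma$-convergence machinery to apply.
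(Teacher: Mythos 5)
Your proposal is correct and follows essentially the same route as the paper, which simply invokes the $\Gamma$-convergence from Theorem \ref{thm:dteoc:GammaConvergence} together with \cite[Corollary~7.20]{dalmaso1993introduction}. The one thing you add is the explicit verification of equi-coercivity via the measure-independent minorant $\is{u}\mapsto\sum_{n=0}^{N-1} r(\is{u}(n))$ from the proof of Lemma \ref{lem:dteoc:coercivityOfCostFunctional} --- a hypothesis of the cited corollary that the paper leaves implicit --- so your write-up is, if anything, slightly more complete.
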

\begin{proof}
Due to the $\Gamma$-convergence from Theorem \ref{thm:dteoc:GammaConvergence}, this result follows immediately from \cite[Corollary~7.20]{dalmaso1993introduction}. \qed
\end{proof}
\section{Conclusion} \label{sec:ecc:conclusion}
In this work, we provide the first steps toward optimal control of ensembles of discrete-time dynamical systems.
We consider very general nonlinear systems, our optimal control setup encompasses a broad class of stage costs and terminal costs, and we focus on average total cost functions based on arbitrary Borel probability measures over the ensembles.
We were able to prove existence of minimisers of the corresponding ensemble optimal control problem, and we established a $\Gamma$-convergence result, both under mild assumptions.
The latter ensures the consistent approximation of the ensemble optimal control problem, for example, by empirical probability measures, which in turn allow the application of numerical algorithms.
Altogether, we build a solid theoretical foundation for optimal control of ensembles of discrete-time dynamical systems.
Ongoing work is concerned with aggregation approaches beyond averages, for example, using min-max formulations, as well as generalisations to the infinite-horizon case.

\section*{Acknowledgements}
The authors acknowledge funding from DFG Project FO 767/10-2 (eBer-24-32734) "Implicit Bias in Adversarial Training".

\bibliographystyle{unsrt} 
\bibliography{refs} 

\end{document}